\documentclass[12pt]{article}
\usepackage{geometry}                
\geometry{letterpaper}                   % \ldots or a4paper or a5paper or \ldots 
\usepackage{graphicx}
\usepackage{amssymb, enumerate}
\usepackage{epstopdf}
\DeclareGraphicsRule{.tif}{png}{.png}{`convert #1 `dirname #1`/`basename #1 .tif`.png}

\usepackage{amsmath}
\usepackage{amssymb}
\usepackage{amsthm}
\newtheorem{theorem}{Theorem}%[section]
\newtheorem{lemma}[theorem]{Lemma}
\newtheorem{corollary}[theorem]{Corollary}

\newtheorem{question}[theorem]{Question}

\title{On the Ramsey number of the double star}

\author{Freddy Flores Dub\'o and Maya Stein\thanks{Department of Mathematical Engineering, University of Chile, and Center for Mathematical Modeling. MS acknowledges support by ANID Regular Grant 1221905 and by ANID PIA CMM FB210005.}
} 

\date{} %leave blank

\begin{document}

\maketitle
\begin{abstract} 
The double star $S(m_1, m_2)$ is obtained from joining the centres of a star with $m_1$ leaves and a star with $m_2$ leaves. 
We give a short proof of a new upper bound on the two-colour Ramsey number of $S(m_1, m_2)$ which holds for all  $m_1, m_2$ with $\frac{\sqrt 5 +1}{2} m_2<m_1<3m_2$. Our result implies that for all positive~$m$, the Ramsey number of the double star $S(2m, m)$  is at most $\lceil 4.275m\rceil+1$.
 \end{abstract}

\section{Introduction}

The much studied {Ramsey number} $R(H)$  of a graph $H$ is defined as the smallest integer $n$ such that every 2-colouring of the edges of~$K_n$ contains a monochromatic copy of $H$. The case when $H$ is a complete graph is the subject of Ramsey's famous theorem from the 1930's, and determining Ramsey numbers of complete graphs is notoriously difficult. For a recent breakthrough, see~\cite{cgms}.

Among the earliest non-complete graphs $H$ to be studied were different kinds of trees. In 1967, Gerencs\'er and Gy\'arf\'as~\cite{GerencserGyarfas} showed that  $R(P_k)=k+\lfloor\frac{k+1}2\rfloor$, where  $P_k$ is  the $k$-edge path. 
For $k$-edge stars $K_{1,k}$, the Ramsey number is  larger: Harary~\cite{Harary:RecentRamsey} observed in 1972 that $R(K_{1,k},K_{1,k})=2k$ if   $k$  is odd, and $R(K_{1,k},K_{1,k})=2k-1$ if $k$ is even. 

Burr and Erd\H os~\cite{BE76} conjectured in 1976 that  $R(T_k)\le R(K_{1,k},K_{1,k})$, for any tree~$T_k$ with $k$ edges. For large $k$, it is known that $R(T_k)\le 2k$, by the results of~\cite{Zhao2011}.
However,   this bound far from best possible for paths, which motivated the search for a more fine-tuned conjecture. Note that paths are (almost) completely balanced trees, while stars  are the most  unbalanced trees. So, it seems natural to suspect that the Ramsey number of a tree might be related to its unbalancedness, i.e.~the difference in size between the two bipartition classes.

It is easy to see that $$R_B(T):=\max\{2t_1, t_1+2t_2\}-1$$ is a lower bound for the Ramsey number of any tree $T$ with bipartition classes of sizes $t_1\ge t_2\ge 2$. This can be seen by considering the {\it canonical colourings}, which are defined as follows. Take a complete graph $G$ on $R_B(T)-1$ vertices.  If $t_1>2t_2$, partition $V(G)$ into two sets of equal size, colour all edges inside each set  red and colour all remaining edges blue. If $t_1\le 2t_2$, take a set of $t_1+t_2-1$ vertices, colour all edges inside this set red, and  colour all remaining edges blue. It is straightforward to see that no monochromatic copy of $T$ is present in this colouring.

Note that if $T$ is a path then $R_B(T)=R(T)$, and the same holds if $T$ is a star with an even number of edges. In~\cite{Burr74}, 
Burr discusses the canonical colourings and expresses his belief that $R(T)$ may be equal to $R_B(T)$ unless $T$ is an odd star. In 2002,
 Haxell, \L uczak, and Tingley~\cite{HLT02} confirmed this suspicion asymptotically for all   trees with linearly bounded maximum degree. Namely, they proved that for every~$\eta>0$, there exist $t_0$ and $\delta$ such that $R(T)\le (1+\eta)R_B(T)$ for each tree $T$ with $\Delta(T)\le\delta t_1$ and $t_1>t_0$, where $t_1\ge t_2$ are, as before, the sizes of the bipartition classes of the tree $T$.
 
 But already in 1979, Grossman, Harary and Klawe~\cite{GHK79} found that, contrary to Burr's suspicion, there are values of $m_1, m_2$ such that $R(S(m_1, m_2))>R_B(S(m_1, m_2))$ (where $S(m_1, m_2)$ is the double star with $m_i$ leaves in partition class~$i$). 
   However, the examples from~\cite{GHK79} still allowed for the possibility that  for every tree $T$ we would have that $R(T)\le R_B(T)+1$. The authors of~\cite{GHK79} conjectured this to be the truth for all double stars, which they confirmed    for a range of values of $m_1, m_2$. Currently, it is known that this holds if $m_1\ge 3m_2$~\cite{GHK79} or  if $m_1\le 1.699(m_2+1)$~\cite{NSZdouble}. In other words, for $m_1, m_2\in\mathbb N^+$ 
  it holds that 
  \begin{equation}\label{burr}
  R(S(m_1, m_2))\le \max\{2m_1, m_1+2m_2\}+2=R_B(S(m_1, m_2))+1
    \end{equation}
  unless 
  \begin{equation}\label{range}
  \text{$1.699(m_2+1)<m_1< 3m_2$.}
  \end{equation}
  %either of these inequalities, 
 
But  in general, inequality~\eqref{burr} is not true.  Norin, Sun and Zhao~\cite{NSZdouble}   showed that $R(S(m_1, m_2))\ge 5m_1/3+5m_2/6+o(m_2)$ for all $m_1\ge m_2\ge 0$ and $R(S(m_1, m_2))\ge 189m_1/115+21m_2/23+o(m_2)$ for all $m_1\ge 2m_2\ge 0$.
In particular, their results imply that   $R(S(m_1, m_2))>R_B(S(m_1, m_2))+1$  if $m_1, m_2$ fulfill
\begin{equation*}\label{nsz}
\frac 74 m_2+o(m_2)\le m_1\le \frac {105}{41} m_2+o(m_2).
\end{equation*}
This range covers the special case that $m_1=2m_2$. For this case, the results from~\cite{NSZdouble} yield that
$R(S(2m, m))\ge4.2m+o(m)$ while $R_B(S(2m, m))=4m+2.$
This discovery lead the authors of~\cite{NSZdouble} to pose the following question.

\begin{question}[Norin, Sun and Zhao~\cite{NSZdouble}]
Is it true that $R(S(2m, m))=4.2m+o(m)$?
\end{question}

There are few results giving upper bounds on the Ramsey number of the double star for the range of $m_1, m_2$ where~\eqref{burr} does not hold. The inequality $R(S(m_1, m_2))\le 2m_1+m_2+2$ for all $m_1\ge m_2\ge 0$ was established in~\cite{GHK79}, where it is described as a `weak upper bound'. In the preprint~\cite{NSZdouble}, very good asymptotic bounds for $R(S(m_1, m_2))$ are obtained from a computer-assisted proof using the flag algebra method, but as these are not quick to state, we refer the reader to~\cite{NSZdouble}. We remark that  Theorem 4.5 from~\cite{NSZdouble}, used with the invalid pair number 5 from Table 1 of~\cite{NSZdouble}, implies that $\lim_{m\to\infty}R(S(2m, m))/m$ is bounded from above by $4.21526$.
 
Our main result is a short elementary proof of a new upper bound on %the Ramsey number of the double star 
$R(S(m_1,m_2))$  which holds for all values of $m_1, m_2\in \mathbb N^+$ fulfilling $\frac{\sqrt 5 +1}{2} m_2<m_1<3m_2$. 
Observe that $\frac{\sqrt 5 +1}{2}>1.618$, and thus our result covers the whole range of values of $m_1, m_2$ from~\eqref{range}.
\begin{theorem}
\label{theorem12}
Let $ m_1, m_2 \in \mathbb N^+$, with $\frac{\sqrt 5 +1}{2} m_2<m_1<3m_2$.
Then
$$ R (S (m_1, m_2)) \leq \Big\lceil\sqrt{2m_1^2+(m_1+\frac{m_2}2)^2} +\frac{m_2}2\Big\rceil+1. $$
%$$ R (S (m_1, m_2)) \leq \sqrt{3m_1^2+m_1m_2+\frac{m_2^2}4} +\frac{m_2}2+1. $$
\end{theorem}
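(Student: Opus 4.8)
The plan is to take an arbitrary $2$-colouring of the edges of $K_n$ with $n = \lceil\sqrt{2m_1^2+(m_1+\frac{m_2}2)^2}+\frac{m_2}2\rceil+1$, and to show that one of the colour classes must contain a copy of $S(m_1,m_2)$. A double star $S(m_1,m_2)$ is present as soon as we can find two adjacent vertices $u,v$ in a single colour, say red, such that $u$ has red-degree at least $m_1+1$ and $v$ has red-degree at least $m_2+1$, provided the red neighbourhoods are large enough that we can distribute the required $m_1$ and $m_2$ distinct leaves after reserving the edge $uv$ (so one really wants red-degrees at least $m_1+1$ and $m_2+1$ with a total of at least $m_1+m_2$ distinct leaves available). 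So the whole argument reduces to a degree/neighbourhood-counting statement: assuming neither colour contains such an adjacent pair, derive a contradiction with the chosen value of $n$.

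First I would fix the natural threshold $t$ (to be optimised at the end) and call a vertex \emph{red-heavy} if its red-degree is at least $t$, and otherwise \emph{red-light}; symmetrically for blue. The key structural observation is that if there is no red $S(m_1,m_2)$, then the red-heavy vertices behave like an ``independent-ish'' set in red: no two red-heavy vertices of sufficiently high red-degree can be joined by a red edge (otherwise they would form the centres of a red double star, after checking the leaf counts carefully, which is where the exact bipartite sizes $m_1,m_2$ enter). Consequently the red edges among the high-red-degree vertices are forbidden, which forces those vertices to send most of their edges in blue, boosting blue degrees and pushing towards a blue double star. The calculation balances the red side against the blue side.

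The technical heart, and what I expect to be the main obstacle, is to turn these qualitative statements into a sharp quantitative inequality whose solution is exactly the claimed expression $\sqrt{2m_1^2+(m_1+\frac{m_2}2)^2}+\frac{m_2}2$. I anticipate that the right framing is to count, for a well-chosen vertex $v$ of maximum red-degree (or for an averaged quantity), the red and blue edges incident to $v$ and to its neighbourhood, and to set up two competing constraints: one bounding how many vertices can have large red-degree without creating a red double star, and a complementary one in blue. Writing $d$ for the relevant red-degree and $n-1-d$ for the blue-degree, the no-double-star conditions should yield inequalities of the form $d \le$ (something linear in $m_1,m_2$) and $(n-1-d)\le$ (something linear), but crucially combined with a \emph{product} or quadratic constraint coming from double-counting edges between a heavy vertex's neighbourhood and the rest, which is the source of the square root. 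Optimising over the free parameter (the split between the two colours at $v$) should produce the quadratic $2m_1^2+(m_1+\frac{m_2}2)^2$ under the square root, and the additive $\frac{m_2}2$ comes from the asymmetry between $m_1$ and $m_2$ in the leaf-distribution step.

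Finally I would verify that the derived inequality $n < \sqrt{2m_1^2+(m_1+\frac{m_2}2)^2}+\frac{m_2}2 + 1$ is contradicted by the hypothesised colouring, so that for $n$ equal to the ceiling plus one a monochromatic $S(m_1,m_2)$ must appear; the ceiling and the $+1$ absorb the integrality of $n$ and the fact that a double star needs strictly more than $m_1$ and $m_2$ available leaves. The hypotheses $\frac{\sqrt5+1}{2}m_2 < m_1 < 3m_2$ should enter precisely to guarantee that the optimal parameter choice lies in the admissible interior range (rather than at a boundary where the trivial canonical bound~\eqref{burr} or the weak bound $2m_1+m_2+2$ already suffices), and I would check the endpoint cases separately to confirm the stated range is exactly where this computation beats the earlier bounds.
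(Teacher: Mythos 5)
There is a genuine gap, and it sits exactly at the step you describe as the ``key structural observation.'' You claim that if there is no red $S(m_1,m_2)$ then no two red-heavy vertices can be joined by a red edge. That is false: two adjacent vertices of arbitrarily high red degree need not span a red double star, because what is required (see Lemma~\ref{prop1}) is $|N_r(u)\cup N_r(v)|\ge m_1+m_2+2$, and the two neighbourhoods may overlap almost entirely. The canonical colouring in which a clique on $m_1+m_2+1$ vertices is entirely red shows this concretely: every vertex of the clique has red degree $m_1+m_2$, all pairs are red-adjacent, and yet there is no red $S(m_1,m_2)$. The whole difficulty of the theorem is precisely to control this overlap, and your outline never engages with it; the subsequent ``double-counting'' and ``optimisation'' steps are described only at the level of what shape the answer should take, with no inequality actually derived, so the square-root expression is not obtained.

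For comparison, the paper's proof handles the overlap as follows. It first invokes a lemma of Norin, Sun and Zhao (Lemma~\ref{colours}) to conclude that in one colour, say blue, \emph{every} vertex has degree at most $m_1$; hence the red graph has minimum degree at least $m_2+m_3$, where $m_3:=\lceil\sqrt{2m_1^2+(m_1+\frac{m_2}{2})^2}-(m_1+\frac{m_2}{2})\rceil$ and $n=m_1+m_2+m_3+1$. It then fixes a vertex $v$ and a set $A\subseteq N_r(v)$ of size $m_2+m_3>m_1$, shows via Lemma~\ref{lema2} that every $w\in A$ has at most $m_1-m_3$ red neighbours outside $A\cup\{v\}$ (else Lemma~\ref{prop1} applies with the edge $vw$), and then \emph{averages} to find a vertex $z$ outside $A\cup\{v\}$ with few red neighbours in $A$ --- hence with at least $(m_2+m_3)\frac{m_3}{m_1}$ red neighbours outside $A$. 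Since $d_b(z)\le m_1<|A|$, $z$ sends a red edge to some $u\in A$, and now the neighbourhoods of $u$ (mostly inside $A\cup\{v\}$) and $z$ (largely outside $A$) are forced to have small overlap; the inequality $2m_1m_3+m_2m_3+m_3^2\ge 2m_1^2$, which is exactly the defining property of $m_3$, gives $|N_r(u)\cup N_r(z)|\ge m_1+m_2+2$ and Lemma~\ref{prop1} finishes. None of these mechanisms --- the one-colour degree bound, the set $A$, the averaging to find $z$, or the quadratic in $m_3$ --- appears in your proposal, so as written it does not constitute a proof.
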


% true but bad $$ R (S (m_1, m_2)) \leq \sqrt 3 m_1+ m_2+1. $$

As an immediate corollary of our theorem, we obtain for the double star $S(2m, m)$ the following bound. %, which is close to the term $4.2m+o(m)$ from Question~1. %, suggested in~\cite{NSZdouble}.

\begin{corollary}
\label{theorem11}
$ R (S (2m, m)) \leq \lceil4.27492m\rceil + 1$ for all $ m \in \mathbb N^+$.
\end{corollary}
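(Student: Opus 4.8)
The plan is to derive the corollary as a direct specialization of Theorem~\ref{theorem12}. I would set $m_1 = 2m$ and $m_2 = m$, first checking that this choice lies in the admissible range. Since $\frac{\sqrt 5 + 1}{2} < 1.619 < 2 < 3$, the hypothesis $\frac{\sqrt 5 + 1}{2}m_2 < m_1 < 3m_2$ becomes $1.619\,m < 2m < 3m$, which holds for every positive integer $m$. Thus the theorem applies.

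Next I would substitute into the bound of Theorem~\ref{theorem12}. The quantity inside the ceiling is
\[
\sqrt{2m_1^2 + \left(m_1 + \tfrac{m_2}{2}\right)^2} + \frac{m_2}{2}
= \sqrt{2(2m)^2 + \left(2m + \tfrac{m}{2}\right)^2} + \frac{m}{2}
= \sqrt{8m^2 + \tfrac{25}{4}m^2} + \frac{m}{2}.
\]
Collecting terms under the radical gives $\sqrt{\frac{32+25}{4}m^2} = \frac{\sqrt{57}}{2}m$, so the expression equals $\left(\frac{\sqrt{57} + 1}{2}\right)m$.

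It remains to verify the numerical estimate $\frac{\sqrt{57}+1}{2} < 4.27492$, i.e.\ $\sqrt{57} < 7.54984$, which follows since $7.54984^2 = 57.000\ldots > 57$; this is the only quantitative check and the main (minor) obstacle. With this inequality in hand, the number inside the ceiling satisfies $\left(\frac{\sqrt{57}+1}{2}\right)m < 4.27492\,m$, and since the ceiling function is monotone, $\left\lceil \frac{\sqrt{57}+1}{2}m \right\rceil \le \lceil 4.27492\,m \rceil$. Combining with the $+1$ from Theorem~\ref{theorem12} yields $R(S(2m,m)) \le \lceil 4.27492\,m \rceil + 1$, completing the proof.
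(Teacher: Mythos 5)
Your proposal is correct and matches the paper's (implicit) argument: the paper states the corollary as an immediate consequence of Theorem~\ref{theorem12}, and your substitution $m_1=2m$, $m_2=m$ giving $\frac{\sqrt{57}+1}{2}m$ together with the check $\sqrt{57}<7.54984$ is exactly the intended verification. All steps, including the range check and the monotonicity of the ceiling, are sound.
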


\section{Preliminaries}

In this section we prepare the proof of the main result, Theorem~\ref{theorem12}, by  proving some auxiliary results.
We start with a very simple lemma  for recurrent later use. A similar lemma appears in~\cite{NSZdouble}.
\begin{lemma}
\label{prop1}
Let $m_1, m_2\in\mathbb N$,  let $G$ be a graph and let $vw \in E (G) $ such that $ d (v) > m_1 $, $d(w)> m_2$, and $ | N (v) \cup N (w) | \geq m_1+m_2 + 2 $.
Then
$S (m_1, m_2)  \subseteq G. $
\end{lemma}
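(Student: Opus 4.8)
The plan is to use $v$ and $w$ as the two centres of the double star, letting $v$ host the $m_1$ leaves and $w$ host the $m_2$ leaves, and to find the required leaves inside $N(v)$ and $N(w)$. Concretely, I want to produce disjoint sets $L_1\subseteq N(v)\setminus\{v,w\}$ and $L_2\subseteq N(w)\setminus\{v,w\}$ with $|L_1|=m_1$ and $|L_2|=m_2$. Together with the edge $vw$ these yield a copy of $S(m_1,m_2)$: the leaves are distinct from the centres because we excluded $v,w$, and the two leaf-sets are disjoint by construction.

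First I would record the three counting facts the hypotheses supply. Since the graph is simple and $vw\in E(G)$, we have $w\in N(v)$ and $v\in N(w)$, so from $d(v)>m_1$ and $d(w)>m_2$ it follows that $|N(v)\setminus\{w\}|\ge m_1$ and $|N(w)\setminus\{v\}|\ge m_2$. Moreover both $v$ and $w$ lie in $N(v)\cup N(w)$, whence $|(N(v)\cup N(w))\setminus\{v,w\}|\ge m_1+m_2$.

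Next I would partition the candidate leaves into $P=(N(v)\setminus N(w))\setminus\{v,w\}$, $Q=(N(w)\setminus N(v))\setminus\{v,w\}$ and $C=(N(v)\cap N(w))\setminus\{v,w\}$; a vertex of $P$ can serve only $v$, one of $Q$ only $w$, and one of $C$ either. The three facts above become $|P|+|C|\ge m_1$, $|Q|+|C|\ge m_2$ and $|P|+|Q|+|C|\ge m_1+m_2$. If $|P|\ge m_1$ I simply take $L_1\subseteq P$ with $|L_1|=m_1$ and then draw $L_2$ from $Q\cup C$, which is large enough and automatically disjoint from $P$; the case $|Q|\ge m_2$ is symmetric. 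In the remaining case $|P|<m_1$ and $|Q|<m_2$, I place all of $P$ in $L_1$ and all of $Q$ in $L_2$, and then allocate $m_1-|P|$ vertices of $C$ to $L_1$ and a further $m_2-|Q|$ vertices of $C$ to $L_2$. This is possible precisely because $(m_1-|P|)+(m_2-|Q|)=m_1+m_2-|P|-|Q|\le|C|$ by the third inequality.

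The whole argument is bookkeeping, so there is no serious obstacle; the only point demanding care is the treatment of $v$ and $w$ themselves. One must remember that $v\in N(w)$ and $w\in N(v)$, so that they are not miscounted as available leaves — which is exactly why the hypothesis asks for $|N(v)\cup N(w)|\ge m_1+m_2+2$ rather than $m_1+m_2$.
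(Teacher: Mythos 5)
Your proof is correct and follows essentially the same route as the paper: both arguments build the double star on the central edge $vw$ and choose the $m_1$ leaves of $v$ giving priority to its private neighbours (your sets $P$, $Q$, $C$ are just an explicit bookkeeping version of the paper's ``choose as many as possible outside $N(w)\cup\{w\}$'' greedy step). No gaps; your extra care with $v$ and $w$ as non-leaves matches what the paper's hypothesis $|N(v)\cup N(w)|\ge m_1+m_2+2$ is there for.
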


\begin{proof} 
To form the double star with central edge $ vw$, first choose $ m_1$ neighbours of~$v $, as many as possible outside $N (w) \cup \{w\}$, the others in $N (w)$. Then,  choose~$m_2 $ neighbours of $ w $ in $ N (w) $, different from $v$ and from the previously  chosen neighbours of $ v $. This concludes the proof.
\end{proof}

Next we show a useful statement about vertex degrees when no double  star is present.

\begin{lemma}
\label{lema2}
Let $m_1, m_2\in\mathbb N$, and let $ G $ be a graph  on $n\ge m_1+m_2+2$ vertices  such that $ S (m_1, m_2)\not \subseteq  G$. 
Let $ v \in V (G) $, let $A\subseteq N(v)$ with $|A|> m_1$ and $d(u)> m_2$ for each $u\in A$.
Let $w\in A$. 
 Then   $w$ has at most $m_1+m_2-|A|$ neighbours in $V(G)\setminus (A\cup\{v\})$. Furthermore,   there is a vertex $z\in V(G)\setminus (A\cup\{v\})$ having at most $$\frac {m_1+m_2-|A|}{n - |A|-1}\cdot |A|$$ neighbours in~$A$.
\end{lemma}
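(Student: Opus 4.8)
The plan is to derive the first (per-vertex) bound directly from Lemma~\ref{prop1}, and then obtain the second bound by a routine averaging argument over the vertices outside $A\cup\{v\}$.

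For the first statement, I would fix $w\in A$ and apply Lemma~\ref{prop1} to the edge $vw$. Its hypotheses are met: since $A\subseteq N(v)$ we have $d(v)\ge|A|>m_1$, and $d(w)>m_2$ holds by assumption. Because $S(m_1,m_2)\not\subseteq G$, Lemma~\ref{prop1} forces $|N(v)\cup N(w)|\le m_1+m_2+1$. I would then write a disjoint decomposition of $N(v)\cup N(w)$ that isolates the neighbours of $w$ lying outside $A\cup\{v\}$. Setting $B:=N(w)\setminus(A\cup\{v\})$, the three sets $A$, $\{v\}$, and $B$ are pairwise disjoint (note $v\notin A$, as $A\subseteq N(v)$ and $G$ is loopless) and all contained in $N(v)\cup N(w)$, since $A\subseteq N(v)$, $v\in N(w)$, and $B\subseteq N(w)$. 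Hence $|A|+1+|B|\le|N(v)\cup N(w)|\le m_1+m_2+1$, which rearranges to $|B|\le m_1+m_2-|A|$, exactly the claimed bound on the number of neighbours of $w$ outside $A\cup\{v\}$.

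For the second statement, I would double-count the edges between $A$ and the set $S:=V(G)\setminus(A\cup\{v\})$. By the first part, every $w\in A$ sends at most $m_1+m_2-|A|$ edges into $S$, so the number of $A$--$S$ edges is at most $|A|\cdot(m_1+m_2-|A|)$. Since $|S|=n-|A|-1$, averaging over the vertices of $S$ produces some $z\in S$ with at most $\frac{(m_1+m_2-|A|)\,|A|}{n-|A|-1}$ neighbours in $A$, which is the desired vertex.

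The only point needing a little care is that the averaging is legitimate, i.e.\ that $S$ is nonempty so that the denominator $n-|A|-1$ is positive. This follows because the first part already forces $|A|\le m_1+m_2$ (otherwise $A\cup\{v\}\subseteq N(v)\cup N(w)$ would give $|N(v)\cup N(w)|\ge|A|+1\ge m_1+m_2+2$, producing the forbidden double star via Lemma~\ref{prop1}); combined with $n\ge m_1+m_2+2$ this yields $n-|A|-1\ge 1$. I do not anticipate any genuine obstacle here: the argument is a single application of Lemma~\ref{prop1} followed by elementary counting, and the hypothesis $n\ge m_1+m_2+2$ is used precisely to guarantee the nonemptiness of $S$.
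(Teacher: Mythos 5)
Your proposal is correct and follows essentially the same argument as the paper: the same application of Lemma~\ref{prop1} to the edge $vw$ via the disjoint decomposition into $A$, $\{v\}$ and $N(w)\setminus(A\cup\{v\})$, followed by the same averaging over $V(G)\setminus(A\cup\{v\})$. Your explicit verification that this latter set is nonempty is a detail the paper leaves implicit, but it is the same proof.
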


\begin{proof} 
Set $ D: = V(G)\setminus (A\cup\{v\}) $. If $ w $ has  $m_1+m_2-|A|+1$ or more neighbours in~$D$, then $| N (v) \cup N (w) | \ge |A|+  (m_1+m_2-|A|+1) +|\{v\}| = m_1+m_2+2$ (we count $v$ as a neighbour of $w$), and we can apply Lemma $ \ref {prop1} $ to see that $S (m_1, m_2) \subseteq G$, which is  a contradiction. 

So $ w $ has at most $m_1+m_2-|A|$  neighbours in  $ D $, which is as desired. Further, as this holds for every $u\in A$, the average number of 
neighbours in $A$ of 
 a vertex from~$D $ is at most
\begin{align*}\frac {(m_1+m_2-|A|)\cdot |A|} {|D|}
& =
\frac {m_1+m_2-|A|}{n - |A|-1}\cdot |A|.
\end{align*}
So any vertex $z\in D$ having at most the average number of 
neighbours in $A$  is as desired.
\end{proof}

We will also need a lemma from~\cite{NSZdouble}, whose elementary proof can be found there. 
\begin{lemma}[Lemma 2.3 in~\cite{NSZdouble}]\label{colours}
Let $n\ge \max\{2m_1, m_1+2m_2\}+2$, and let the edges of $K_n$ be coloured with red and blue such that there is no monochromatic $S(m_1, m_2)$. Then there is a colour $C\in\{red, blue\}$ such that each vertex of $K_n$ has degree at most $m_1$ in colour $C$.
\end{lemma}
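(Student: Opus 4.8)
The plan is to argue by contradiction. Reformulating, the statement fails precisely when there is a vertex with more than $m_1$ red edges and a (possibly different) vertex with more than $m_1$ blue edges; I want to show that, together with the hypothesis $n\ge \max\{2m_1,m_1+2m_2\}+2$, this forces a monochromatic $S(m_1,m_2)$. Throughout I write $d_R,d_B,N_R,N_B$ for the red/blue degrees and neighbourhoods, and I use only the contrapositive of Lemma~\ref{prop1}: for any red edge $xy$ with $d_R(x)>m_1$ and $d_R(y)>m_2$ one must have $|N_R(x)\cup N_R(y)|\le m_1+m_2+1$ (and symmetrically in blue). I will also repeatedly use that $d_R(v)+d_B(v)=n-1\ge\max\{2m_1,m_1+2m_2\}+1$ for every vertex $v$.

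The core of the argument is a single gadget that I would first apply to a vertex $u$ of maximum red degree. Suppose $d_R(u)\ge m_1+m_2+2$. Then every $w\in N_R(u)$ has $d_R(w)\le m_2$: otherwise the red edge $uw$ satisfies all hypotheses of Lemma~\ref{prop1} (here $|N_R(u)\cup N_R(w)|\ge|N_R(u)|\ge m_1+m_2+2$), giving a red $S(m_1,m_2)$. Consequently each such $w$ has $d_B(w)=n-1-d_R(w)\ge m_1+m_2+1$, and since $N_R(u)$ is large and internally red-sparse it contains a blue edge $w_1w_2$. Both endpoints then have blue degree exceeding $m_1$ and $m_2$, so Lemma~\ref{prop1} produces a blue $S(m_1,m_2)$ unless $|N_B(w_1)\cup N_B(w_2)|\le m_1+m_2+1$; but the latter forces $N_B(w_1)=N_B(w_2)$, contradicting $w_2\in N_B(w_1)\setminus N_B(w_2)$. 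Running the same gadget in blue, I conclude that the maximum degree in each colour is at most $m_1+m_2+1$.

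It remains to treat the moderate regime, where both colour-degrees of every vertex lie in $[0,m_1+m_2+1]$ while some red and some blue degree exceed $m_1$. Here I would again start from a vertex $u$ of maximum red degree and feed its red neighbourhood into Lemma~\ref{lema2}. The degree-sum bound is what makes this possible: a red neighbour $w$ of $u$ with $d_R(w)\le m_2$ would have $d_B(w)\ge m_1+m_2+1$, already the largest possible blue degree, so two blue-adjacent such vertices would have equal blue neighbourhoods---impossible. Hence the low-red-degree neighbours of $u$ form a red clique (bounding their number), while all remaining neighbours have red degree $>m_2$ and thus satisfy the hypotheses of Lemma~\ref{lema2}. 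Its averaging conclusion then yields a vertex outside $N_R(u)\cup\{u\}$ with few red, hence many blue, neighbours inside $N_R(u)$, which I would parlay into a blue $S(m_1,m_2)$, finishing with the same equal-neighbourhood contradiction.

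The main obstacle is exactly this moderate regime. The averaging bound supplied by Lemma~\ref{lema2} is, on its own, too weak to guarantee a centre of blue degree larger than $m_1$, so it must be combined with the rigidity coming from the saturation/clique structure, and one has to track the boundary cases carefully---in particular when $n$ attains its minimum value and degrees are pinned to exact values, and separately according to whether $m_1\le 2m_2$ or $m_1>2m_2$. By contrast, the large-degree reduction of the second paragraph is clean and colour-symmetric, and the equal-neighbourhood trick is the recurring device that closes every branch.
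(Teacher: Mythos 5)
First, a point of reference: the paper does not prove Lemma~\ref{colours} at all --- it is imported verbatim from~\cite{NSZdouble} --- so there is no in-paper proof to compare against, and your attempt has to stand on its own. Your second paragraph does stand: the reduction showing that no vertex can have degree $m_1+m_2+2$ or more in either colour is correct (and could even be sharpened to $m_1+m_2+1$ by noting that for a red edge $uw$ one has $|N_R(u)\cup N_R(w)|\ge |N_R(u)|+|\{u\}|$), and the equal-neighbourhood trick is applied correctly there.

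The genuine gap is the third paragraph, which is where the lemma actually has to be proved, and which you yourself concede is not closed. Two concrete problems. (i) After discarding the red neighbours of $u$ with red degree at most $m_2$, the surviving set $A$ need not satisfy the hypothesis $|A|>m_1$ of Lemma~\ref{lema2}: in the moderate regime $d_R(u)\le m_1+m_2+1$, and the discarded red clique can have up to $m_2+1$ vertices, so $|A|$ can drop to $m_1-m_2$; requiring $|A|>m_1$ would push you back into the case $d_R(u)\ge m_1+m_2+2$ that you already disposed of. (ii) More seriously, even when $A=N_R(u)$ with $|A|=\Delta_R\ge m_1+1$, the averaging conclusion of Lemma~\ref{lema2} only yields a vertex $z$ with at least $\Delta_R\cdot\frac{n-1-m_1-m_2}{n-\Delta_R-1}$ blue neighbours in $A$; taking $n=2m_1+2$ and $\Delta_R=m_1+1$ this is about $(m_1+1)(m_1-m_2+1)/m_1\approx m_1-m_2$, far below the threshold $m_1+1$ needed to make $z$ a blue centre, and no vertex of $A$ is known to have blue degree above $m_1$ either, so Lemma~\ref{prop1} cannot be invoked on any blue edge at $z$. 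You name the missing ingredient (``rigidity coming from the saturation/clique structure'') but never supply it, and you explicitly defer the boundary cases ($n$ minimal, $m_1\le 2m_2$ versus $m_1>2m_2$) in which the degree constraints are tight. As written, the moderate regime --- which is the entire content of the lemma once the easy large-degree case is removed --- remains unproved.
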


\section{Proof of Theorem \ref {theorem12}. }

The whole section is devoted to the proof of Theorem 
\ref {theorem12}. Let $m_1,m_2\in\mathbb N^+$ be given, fulfilling
\begin{equation}\label{m1m2}
\frac{\sqrt 5 +1}{2} m_2<m_1<3m_2.
\end{equation}
 Set
  \begin{equation}\label{thisism3}
  m_3:= \Big\lceil\sqrt{2m_1^2+(m_1+\frac{m_2}2)^2} - (m_1+\frac{m_2}2)\Big\rceil.
  \end{equation}
Using~\eqref{m1m2} and~\eqref{thisism3}, it is easy to calculate that
\begin{equation}\label{thatisalpha}
m_3>\max\{m_2, m_1-m_2\},
\end{equation}
and in particular, we have that $m_3\ge 1$.
Set % \begin{equation}\label{n=m123}
$n:=m_1+m_2 +m_3+1$,
%\end{equation}
and let a red and blue colouring of the edges of $K_n$  be given. 
Let~$G_r$ be the subgraph of $K_n$ induced by the red edges, and $G_b$ be the subgraph of $K_n$  induced by the blue edges. For any~$u\in V(K_n)$, let $N_r(u)$ be the set of all neighbours of  $u$ in $G_r$, and let $N_b(u)$ be the set of all neighbours of  $u$ in $G_b$. Set $d_r(u):=|N_r(u)|$ and $d_b(u):=|N_b(u)|$.

For contradiction assume that there is no monochromatic $S (m_1, m_2)$. Note that   $n\ge  \max\{2m_1, m_1+2m_2\}+2$ because of~\eqref{thatisalpha} % and~\eqref{n=m123}, 
and since $n$ is an integer. So, we can use Lemma~\ref{colours} to see that there is a colour, which we may assume to be blue, such that 
every vertex has degree at most $m_1$ in that colour. That is, $d_b(u)\le m_1$  for all $u\in V(G)$, and thus, % by~\eqref{n=m123},
\begin{equation}\label{mindeg} 
\delta(G_r) \ge  m_2+m_3.
\end{equation}

Now choose any vertex $v$ and a subset $A$ of $N_r(v)$ with 
\begin{equation}\label{A} 
|A|=m_2+m_3.
\end{equation}
%and a vertex $v'\in A$. 
%Among all possible choices for the triple $v, A, v'$, we choose $v, A, v'$ such that $|N_r(v')\setminus A|$ is maximised.
%Let $ v $ be any vertex.  
%We set $x:=m_1-d_b(v) $, that is, 
%  \begin{equation}\label{bv}d_b(v)=m_1 -x,\end{equation}
%  and
%\begin{equation}\label{rv}
%d_r(v)= m_2+m_3+x.
%\end{equation}

By~\eqref{mindeg}, and since $m_2+m_3>m_1$ by~\eqref{thatisalpha}, we know that $|A|>m_1$ and $\delta(G_r) >  m_2$. So, we can use
  Lemma \ref {lema2} in $G_r$ 
   to see that for any $w\in A$, we have
\begin{equation*}\label{wew}
|N_r(w)\setminus (A\cup \{v\})|\le  m_1+m_2-(m_2+m_3)=m_1-m_3. 
\end{equation*}
and therefore, %for each $w\in N_r(v)$,
\begin{align}\label{wnew}
|N_r(w)\cap  (A\cup \{v\})| & = 
d_r(w)-|N_r(w)\setminus (A\cup \{v\})| \notag \\ &\ge
m_2+m_3-(m_1-m_3)
\notag \\ &=m_2+2m_3-m_1.
\end{align}
We employ
 Lemma \ref {lema2} once more, this time to find a vertex $z\notin A\cup\{v\}$  such that
 \begin{equation*}\label{xxx}
|N_r(z)\cap A| \le  \frac {m_1+m_2-|A|}{n - |A|-1}\cdot |A|=  \frac { m_1-m_3}{m_1}\cdot   (m_2+m_3),
   \end{equation*}
    where we use~\eqref{A} for the equality.
We deduce that
\begin{align}%\label{now}
|N_r(z)\setminus A| & = d_r(z)-|N_r(z)\cap A|\notag \\ & \ge  (m_2+m_3)- \frac { m_1-m_3}{m_1}\cdot   (m_2+m_3) 
\notag \\ &=
 (m_2+m_3)\frac{m_3}{m_1}.\label{nownow}%\notag
\end{align}
%It is not overly difficult to see that this expression, viewed as a function in $x$ is minimised when $x=0$, and thus
%\begin{align}\label{nownow}
%|N_r(z)\cap N_b(v)| &\ge
% (m_2+m_3)\frac{m_3}{m_1}
%\end{align}

Further, note that $d_b(z)\le m_1<m_2+m_3=|A|$ because of~\eqref{mindeg}, \eqref{thatisalpha} and~\eqref{A}. Therefore, we know that vertex $z$ sends at least one red edge to~$A$.
Consider any red edge $uz$ with $u\in A$. 
%We note that  $d_r(u)\ge m_2+1$ by~\eqref{mindeg} and since $m_3\ge 1$. Also, $d_r(z)\ge m_1+1$ by~\eqref{thatisalpha} and~\eqref{mindeg}.
Using~\eqref{wnew} and~\eqref{nownow},  we get
\begin{align*}
|N_r(u)\cup N_r(z)|
 & \ge |N_r(u)\cap (A\cup\{v\})|+|N_r(z)\setminus A|+|\{u,z\}|\\
%& \ge m_2+2m_3-m_1-2+    (m_2+m_3)\frac{m_3}{m_1}+2\\
& \ge m_2+2m_3-m_1+    (m_2+m_3)\frac{m_3}{m_1}+2\\
& \ge m_1+m_2+2,
\end{align*}
where for the last inequality we use the fact that $2m_1m_3+    m_2m_3+m_3^2\ge 2m_1^2$ 
which can be calculated from~\eqref{thisism3}. 
So, we can apply Lemma~\ref {prop1} to find a red double star with central edge $uz$, and are done.

\section*{Acknowledgment}
The second author would like to thank C. Karamchedu, M. Karamchedu and M.~Klawe for pointing out a missing ceiling in the bound of Theorem~\ref{theorem12} in an earlier version of this paper.

\bibliographystyle{acm}
\bibliography{trees}

\end{document}